\theoremstyle{plain}
\newtheorem{Thm}{Theorem}[section]
\theoremstyle{definition}
\newtheorem{Rem}[Thm]{Remark}
\numberwithin{equation}{section}
\title{A product formula for volumes of varieties}
\author{Yujiro Kawamata}
\begin{document}

\maketitle

The volume $v(X)$ of a smooth projective variety $X$ is defined by
\[
v(X) = \text{lim sup} \frac{\dim H^0(X, mK_X)}{m^d/d!}
\]
where $d = \dim X$.
This is a birational invariant.

\begin{Thm}
Let $f: X \to Y$ be a surjective morphism of smooth projective varieties with 
connected fibers.
Assume that both $Y$ and the general fiber $F$ of $f$ 
are varieties of general type.
Then 
\[
\frac{v(X)}{d_X!} \ge \frac{v(Y)}{d_Y!}\frac{v(F)}{d_F!}
\]
where $d_X = \dim X$, $d_Y = \dim Y$ and $d_F=\dim F$.
\end{Thm}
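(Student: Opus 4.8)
The plan is to translate the inequality into a statement about the direct images $f_{*}\omega_{X/Y}^{\otimes m}$ and then feed Viehweg's weak positivity into an asymptotic Riemann--Roch estimate.

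\textbf{Step 1: reduction to a statement about direct images.}
Since $Y$ and $F$ are of general type, so is $X$ (a known case of subadditivity of Kodaira dimension over a base of general type); hence all three volumes are positive and the $\limsup$'s are genuine limits. Because $v$, the hypotheses, and the general fibre up to birational equivalence are all birational invariants, I may freely replace $X$ and $Y$ by smooth birational models. By the projection formula $H^{0}(X,mK_{X})=H^{0}\bigl(Y,\mathcal{E}_{m}\otimes\omega_{Y}^{\otimes m}\bigr)$, where $\mathcal{E}_{m}:=f_{*}\omega_{X/Y}^{\otimes m}$ is torsion-free of rank $r_{m}=h^{0}(F,mK_{F})$ (cohomology and base change over the generic point of $Y$). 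So the theorem is equivalent to the asymptotic lower bound
\[
h^{0}\!\bigl(Y,\mathcal{E}_{m}\otimes\omega_{Y}^{\otimes m}\bigr)\ \ge\ \bigl(1-o(1)\bigr)\,h^{0}(Y,mK_{Y})\,h^{0}(F,mK_{F})
\]
along a suitable sequence $m\to\infty$. For a product $X=Y\times F$ the sheaf $\mathcal{E}_{m}$ is free of rank $r_{m}$ and this is an equality; the content of the theorem is that a product is the extremal case and that making $\mathcal{E}_{m}$ nontrivial, but positive, can only help.

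\textbf{Step 2: the estimate.}
The positivity input is Viehweg's theorem: $\mathcal{E}_{m}$ is weakly positive over a dense open subset of $Y$, for every $m\ge1$; in particular $c_{1}(\det\mathcal{E}_{m})$ is pseudo-effective, and every torsion-free quotient of $\mathcal{E}_{m}$ has pseudo-effective determinant. I combine this with Fujita approximation on $Y$: after a birational modification there are $p\ge1$ and a decomposition $pK_{Y}\sim H+E$ with $H$ nef and big (very ample), $E\ge0$, and $H^{d_{Y}}\ge(v(Y)-\varepsilon)p^{d_{Y}}$. Writing $m=p\ell$ and using $mK_{X}=mK_{X/Y}+\ell f^{*}H+\ell f^{*}E$ with $\ell f^{*}E\ge0$ gives $h^{0}(X,mK_{X})\ge h^{0}\bigl(Y,\mathcal{E}_{m}\otimes\mathcal{O}_{Y}(\ell H)\bigr)$. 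The quantitative lemma to isolate is: \emph{if $\mathcal{E}$ is weakly positive of rank $r$ and $L$ is nef, big and sufficiently positive, then $h^{0}(Y,\mathcal{E}\otimes L)\ge r\cdot h^{0}(Y,L)\,(1-o(1))$.} I would prove it by choosing a full flag of subsheaves $0=\mathcal{F}_{0}\subset\cdots\subset\mathcal{F}_{r}=\mathcal{E}$ with invertible quotients $\mathcal{M}_{i}$ (so $\sum_{i}c_{1}(\mathcal{M}_{i})=c_{1}(\mathcal{E})$), applying Serre vanishing and asymptotic Riemann--Roch to each $\mathcal{M}_{i}\otimes L$, and noting that the first correction term $\sum_{i}\bigl(L^{d_{Y}-1}\!\cdot c_{1}(\mathcal{M}_{i})\bigr)=L^{d_{Y}-1}\!\cdot c_{1}(\det\mathcal{E})$ is $\ge0$ because $L$ is nef and $c_{1}(\det\mathcal{E})$ is pseudo-effective; the remaining terms are of lower order. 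Applying the lemma with $\mathcal{E}=\mathcal{E}_{m}$, $L=\ell H$, and using $r_{m}=\tfrac{v(F)}{d_{F}!}m^{d_{F}}(1+o(1))$ together with $h^{0}(Y,\ell H)\ge\tfrac{v(Y)-\varepsilon}{d_{Y}!}m^{d_{Y}}(1+o(1))$ (since $\ell H$ is of size $\sim mK_{Y}$), one gets $h^{0}(X,mK_{X})\ge\tfrac{d_{X}!}{d_{Y}!\,d_{F}!}\bigl(v(F)-o(1)\bigr)\bigl(v(Y)-\varepsilon\bigr)\tfrac{m^{d_{X}}}{d_{X}!}\bigl(1-o(1)\bigr)$; letting first $\ell\to\infty$ and then $\varepsilon\to0$ finishes the proof.

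\textbf{Step 3: the main obstacle.}
The delicate point is that in Step 2 the sheaf $\mathcal{E}_{m}$ and the twist $\ell H$ grow together (they are coupled by $m=p\ell$), so the ``$\ell\to\infty$ with $\mathcal{E}$ fixed'' version of the lemma is not directly applicable: one needs $h^{0}(Y,\mathcal{E}_{m}\otimes\mathcal{O}_{Y}(\ell H))\ge r_{m}h^{0}(Y,\ell H)(1-o(1))$ \emph{uniformly over the family} $\{\mathcal{E}_{m}\}$, with the error genuinely $o(m^{d_{X}})$ and with the implicit constants in the Serre-vanishing and Riemann--Roch steps controlled independently of $m$. This is where the full force of Viehweg-type weak positivity must be extracted — not merely pseudo-effectivity of $\det\mathcal{E}_{m}$, but the positivity of the graded pieces of a Harder--Narasimhan flag adapted to a polarization — together with finite generation of the relative canonical algebra (BCHM) to make the bookkeeping uniform. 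When $\dim Y=1$ the argument is clean and self-contained: weak positivity forces every graded piece of an HN-refined full flag of $\mathcal{E}_{m}$ to have non-negative degree, and Riemann--Roch on the curve then gives $h^{0}(Y,\mathcal{E}_{m}\otimes L)\ge r_{m}h^{0}(Y,L)$ on the nose for $L$ positive enough, with no hidden $m$-dependence. The cleanest geometric reformulation of what must be proved in general is this: passing to a relative canonical model $h\colon X^{\mathrm{rc}}\to Y$ (fibres of general type, so $K_{X^{\mathrm{rc}}/Y}$ is $h$-ample) and expanding $v(X)=\operatorname{vol}\bigl(K_{X^{\mathrm{rc}}/Y}+h^{*}K_{Y}\bigr)$ binomially, the term with $d_{F}$ factors of $K_{X^{\mathrm{rc}}/Y}$ contributes exactly $\binom{d_{X}}{d_{Y}}(K_{X^{\mathrm{rc}}/Y}\!\cdot F)^{d_{F}}\cdot(K_{Y})^{d_{Y}}=\binom{d_{X}}{d_{Y}}v(F)v(Y)$, so the theorem follows once one knows that the remaining ``mixed'' intersection numbers $(K_{X^{\mathrm{rc}}/Y})^{i}\!\cdot(h^{*}K_{Y})^{\,d_{X}-i}$ with $i>d_{F}$ are all non-negative — and this non-negativity is precisely Viehweg's semipositivity in disguise (one runs it on a model where, by Fujita approximation, $K_{Y}$ has been replaced by a nef and big class up to $\varepsilon$, and passes to the limit).
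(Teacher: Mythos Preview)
Your Step 3 is honest: the uniformity problem you flag is the genuine gap, and nothing in Steps 1--2 closes it. The lemma in Step 2 is fine for a \emph{fixed} weakly positive $\mathcal{E}$ as $\ell\to\infty$, but you apply it to the moving family $\mathcal{E}_{p\ell}$, whose Chern data varies with $\ell$; the ``lower-order'' Riemann--Roch terms are then not lower order at all. Your suggested repairs do not come with proofs: weak positivity is inherited by quotients, but the graded pieces of a Harder--Narasimhan flag are subquotients, so you do not automatically control the top piece; and the binomial expansion of $\operatorname{vol}(K_{X^{\mathrm{rc}}/Y}+h^{*}K_{Y})$ into non-negative intersection numbers requires both summands to be nef, whereas $K_{X^{\mathrm{rc}}/Y}$ is only $h$-ample, not globally nef, so that reformulation hides a statement at least as hard as the one you are trying to prove.

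The paper avoids the uniformity issue entirely by reversing where Fujita approximation is applied: on the \emph{fibre}, not the base. Fix once and for all an ample $H$ on $Y$ with $m_{0}K_{Y}\ge H$, then choose $m_{1}$ and an ample $L\le m_{1}K_{F}$ with $\operatorname{vol}(L/m_{1})>v(F)-\varepsilon$. Apply Viehweg \emph{once}, to the single sheaf $\mathcal{E}_{m_{1}}=f_{*}\omega_{X/Y}^{\otimes m_{1}}$: for some $k$, $S^{k}(\mathcal{E}_{m_{1}}\otimes\mathcal{O}_{Y}(H))$ is generically globally generated. Now only $m$ varies. Generic generation gives a trivial subsheaf of full rank inside $S^{m}S^{k}\mathcal{E}_{m_{1}}\otimes\mathcal{O}_{Y}(kmH)$, and the image of $S^{m}S^{k}\mathcal{E}_{m_{1}}$ in $\mathcal{E}_{km_{1}m}$ has rank at least $h^{0}(F,kmL)$; tensoring with $k(m_{1}-m_{0})mK_{Y}$ and taking global sections yields $h^{0}(X,km_{1}mK_{X})\ge h^{0}(F,kmL)\cdot h^{0}(Y,k(m_{1}-m_{0})mK_{Y})$. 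Since $k,m_{0},m_{1}$ are fixed, the $m\to\infty$ asymptotics are clean, and choosing $m_{1}\gg m_{0}$ absorbs the $(m_{1}-m_{0})/m_{1}$ loss. The idea you are missing is exactly this decoupling: freeze the sheaf to which weak positivity is applied, and run the asymptotics through symmetric powers rather than through the family $\{\mathcal{E}_{m}\}$.
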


\begin{proof}
Let $H$ be an ample divisor on $Y$.
There exists a positive integer $m_0$ such that $m_0K_Y - H$ is effective.

Let $\epsilon$ be a positive integer.
By Fujita's approximation theorem (\cite{F}), 
after replacing a birational model of $X$,
there exists a positive integer $m_1$ 
and ample divisors $L$ on $F$ such that 
$m_1K_F - L$ is effective and $v(\frac 1{m_1}L) > v(F)- \epsilon$.

By Viehweg's weak positivity theorem (\cite{V}), 
there exists a positive integer $k$ 
such that $S^k(f_*\mathcal{O}_X(m_1K_{X/Y}) \otimes \mathcal{O}_Y(H))$ is 
generically generated by global sections for a positive integer $k$.
$k$ is a function on $H$ and $m_1$.

We have 
\[
\begin{split}
&\text{rank Im}(S^mS^k(f_*\mathcal{O}_X(m_1K_{X/Y})) \to 
f_*\mathcal{O}_X(km_1mK_{X/Y})) \\
&\ge \dim H^0(F,kmL) \\
&\ge (v(F)- 2 \epsilon)\frac{(km_1m)^{d_F}}{d_F!}
\end{split}
\]
for sufficiently large $m$.

Then
\[
\begin{split}
&\dim H^0(X,km_1mK_X) \\
&\ge \dim H^0(Y,k(m_1-m_0)mK_Y) \times 
(v(F)- 2 \epsilon)\frac{(km_1m)^{d_F}}{d_F!} \\
&\ge (v(Y) - \epsilon)\frac{(k(m_1-m_0)m)^{d_Y}}{d_Y!}
(v(F)- 2 \epsilon)\frac{(km_1m)^{d_F}}{d_F!} \\
&\ge (v(Y) - 2 \epsilon)(v(F)- 2 \epsilon)\frac{(km_1m)^{d_X}}{d_Y!d_F!}
\end{split}
\]
if we take $m_1$ large compared with $m_0$ such that 
\[
\frac{(v(Y) - \epsilon)}{(v(Y)- 2 \epsilon)} \ge (\frac{m_1}{m_1-m_0})^{d_Y}.
\]
\end{proof}

\begin{Rem}
If $X = Y \times F$, then we have an equality in the formula.
We expect that the equality implies the isotriviality of the family.
\end{Rem}


Department of Mathematical Sciences, University of Tokyo, 

Komaba, Meguro, Tokyo, 153-8914, Japan 

kawamata@ms.u-tokyo.ac.jp

\end{document}